\title[A birational embedding with two Galois points]{A birational embedding of an algebraic curve into a projective plane with two Galois points} 
\author{Satoru Fukasawa}
\subjclass[2010]{14H50}
\keywords{Galois point, plane curve, Galois group}
\address{Department of Mathematical Sciences, Faculty of Science, Yamagata University, Kojirakawa-machi 1-4-12, Yamagata 990-8560, Japan} 
\email{s.fukasawa@sci.kj.yamagata-u.ac.jp} 
\thanks{The author was partially supported by JSPS KAKENHI Grant Number 16K05088.}
\newtheorem{theorem}{Theorem}
\newtheorem{proposition}{Proposition}
\newtheorem{remark}{Remark}
\begin{document}
\begin{abstract} 
A criterion for the existence of a birational embedding of an algebraic curve into a projective plane with two Galois points is presented. 
Several novel examples of plane curves with two inner Galois points as an application are described. 
\end{abstract}

\maketitle 

\section{Introduction} 
Let $C$ be a (reduced, irreducible) smooth projective curve over an algebraically closed field $k$ of characteristic $p \ge 0$ with $k(C)$ as its function field. 
We consider a rational map $\varphi$ from $C$ to $\Bbb P^2$, which is birational onto its image. 
For a point $P \in \Bbb P^2$, if the function field extension $k(\varphi(C))/\pi_P^*k(\Bbb P^1)$ induced by the projection $\pi_P$ is Galois, then $P$ is called a Galois point for $\varphi(C)$. 
This notion was introduced by Yoshihara (\cite{miura-yoshihara, yoshihara1}).
Furthermore, if a Galois point $P$ is a smooth point of $\varphi(C)$ (resp. is contained in $\Bbb P^2 \setminus \varphi(C)$), then $P$ is said to be inner (resp. outer). 
The associated Galois group at $P$ is denoted by $G_P$. 
If $\varphi: C \rightarrow \varphi(C)$ is isomorphic, then the number of Galois points is completely determined (\cite{fukasawa2, yoshihara1}). 
However, determining the number in general is difficult. 
For example, only seven types  of plane curves with two inner Galois points are known (see the Table in \cite{yoshihara-fukasawa}). 
It is important to find a condition for the existence of two Galois points. 

The following proposition is presented after discussions with Takahashi \cite{takahashi}, Terasoma \cite{terasoma} and Yoshihara \cite{yoshihara3}. 

\begin{proposition} \label{ftt}
Let $C$ be a smooth projective curve. 
Assume that there exist two finite subgroups, $G_1$ and $G_2$, of the full automorphism group ${\rm Aut}(C)$ such that $G_1 \cap G_2=\{1\}$ and $C/{G_i} \cong \Bbb P^1$ for $i=1, 2$. 
Let $f$ and $g$ be generators of function fields of $C/{G_1}$ and $C/{G_2}$, respectively. 
Then, the rational map 
$$ \varphi: C \dashrightarrow \Bbb P^2; \ (f:g:1)$$
is birational onto its image, and two points $P_1=(0:1:0)$ and $P_2=(1:0:0)$ are Galois points for $\varphi(C)$. 
\end{proposition}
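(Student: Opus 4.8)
The plan is to transfer everything into the language of function fields and then read off both conclusions from the Galois correspondence. First I would describe the two projections explicitly: the projection $\pi_{P_1}$ from $P_1=(0:1:0)$ sends $(X:Y:Z)\mapsto (X:Z)$, so that $\pi_{P_1}\circ\varphi$ corresponds to the function $f$ and hence $\pi_{P_1}^*k(\Bbb P^1)=k(f)$; symmetrically $\pi_{P_2}$ from $P_2=(1:0:0)$ sends $(X:Y:Z)\mapsto (Y:Z)$, giving $\pi_{P_2}^*k(\Bbb P^1)=k(g)$. Since $C/G_1\cong\Bbb P^1$ with $f$ a generator of its function field, the subfield $k(f)$ is exactly the fixed field $k(C)^{G_1}$, and likewise $k(g)=k(C)^{G_2}$.

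Next I would record the two facts coming from the finite group actions. Because each $G_i$ is a finite subgroup of $\mathrm{Aut}(C)$ acting faithfully, the extension $k(C)/k(C)^{G_i}$ is Galois with group $G_i$; thus $k(C)/k(f)$ is Galois with group $G_1$ and $k(C)/k(g)$ is Galois with group $G_2$. In particular, the group of automorphisms of $k(C)$ fixing $k(g)$ is precisely $\mathrm{Gal}(k(C)/k(g))=G_2$, a point I will use in the next step.

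The crux is birationality, i.e.\ the claim $k(f,g)=k(C)$, and this is the step where the hypothesis $G_1\cap G_2=\{1\}$ must enter and where the real content lies. I would set $L:=k(f,g)$ and observe that $k(f)\subseteq L\subseteq k(C)$ sits inside the Galois extension $k(C)/k(f)$, so $L$ corresponds to the subgroup $H:=\mathrm{Gal}(k(C)/L)\le G_1$. Any $\sigma\in H$ fixes $L$, hence fixes $g$, hence lies in $\mathrm{Gal}(k(C)/k(g))=G_2$; therefore $H\subseteq G_1\cap G_2=\{1\}$, forcing $L=k(C)$. Consequently the coordinate functions $f,g$ generate $k(C)$, so $\varphi$ is birational onto its image and $k(\varphi(C))=k(C)$. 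The only routine point needing care here is that $\varphi(C)$ is genuinely a curve and $\varphi$ is dominant onto it, which holds because $f$ is non-constant, so $\mathrm{trdeg}_k k(f,g)=\mathrm{trdeg}_k k(C)=1$.

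Finally, with birationality established the two Galois-point assertions are immediate: under the identification $k(\varphi(C))=k(C)$ the extension $k(\varphi(C))/\pi_{P_1}^*k(\Bbb P^1)=k(C)/k(f)$ is Galois with group $G_1$, so $P_1$ is a Galois point, and symmetrically $k(C)/k(g)$ shows that $P_2$ is a Galois point with group $G_2$. I expect the whole argument to be short once the fixed-field identifications are in place, with the intersection condition $G_1\cap G_2=\{1\}$ doing all the work in the birationality step.
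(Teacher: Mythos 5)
Your proposal is correct and follows essentially the same route as the paper: both identify $\pi_{P_1}\circ\varphi$ with $f$ and $\pi_{P_2}\circ\varphi$ with $g$, and both prove birationality by noting that $\mathrm{Gal}(k(C)/k(f,g))$ is simultaneously a subgroup of $G_1$ and of $G_2$, hence trivial by $G_1\cap G_2=\{1\}$. Your write-up merely makes explicit some steps the paper leaves implicit (the fixed-field identifications $k(f)=k(C)^{G_1}$, $k(g)=k(C)^{G_2}$ and the dominance of $\varphi$), which is fine but not a different argument.
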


For both points $P_1$ and $P_2$ to be inner, or outer, we need additional conditions. 
In this article, we present the following criterion. 

\begin{theorem} \label{main}
Let $C$ be a smooth projective curve and let $G_1$ and $G_2$ be different finite subgroups of ${\rm Aut}(C)$. 
Then, there exist a morphism $\varphi: C \rightarrow \Bbb P^2$ and different inner Galois points $\varphi(P_1)$ and $\varphi(P_2) \in \varphi(C)$ such that $\varphi$ is birational onto its image and $G_{\varphi(P_i)}=G_i$ for $i=1, 2$, if and only if the following conditions are satisfied. 
\begin{itemize}
\item[(a)] $C/{G_1} \cong \Bbb P^1$ and $C/{G_2} \cong \Bbb P^1$. 
\item[(b)] $G_1 \cap G_2=\{1\}$. 
\item[(c)] There exist two different points $P_1$ and $P_2 \in C$ such that 
$$P_1+\sum_{\sigma \in G_1} \sigma (P_2)=P_2+\sum_{\tau \in G_2} \tau (P_1) $$
as divisors. 
\end{itemize} 
\end{theorem}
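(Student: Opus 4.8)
The plan is to prove both implications by realizing the two projections from the prospective Galois points as the two quotient maps $C \to C/G_1$ and $C \to C/G_2$. After a projective transformation of $\Bbb P^2$ I may assume the two Galois points are $Q_1=(0:1:0)$ and $Q_2=(1:0:0)$, so that the line $\ell$ joining them is $\{Z=0\}$ and the two projections are the rational functions $x:=X/Z$ and $y:=Y/Z$ on $C$. The whole argument rests on the single divisor $D:=\varphi^{*}\ell$ together with the two polar divisors $\mathrm{div}_\infty(x)$ and $\mathrm{div}_\infty(y)$, which I would compute as fibres of the quotient maps.

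For necessity, suppose such a $\varphi$ exists. Since the projection from a Galois point $Q_i$ is by definition the Galois cover with group $G_i$, I obtain $k(x)=k(C)^{G_1}$ and $k(y)=k(C)^{G_2}$, hence $C/G_i\cong\Bbb P^1$, which is (a). Because $\varphi$ is birational, $k(x,y)=k(C)$; as $k(x,y)\subseteq k(C)^{G_1\cap G_2}\subseteq k(C)$, the group $G_1\cap G_2$ acts trivially and (b) follows. For (c) I note that $x(P_2)=\infty$, so $\mathrm{div}_\infty(x)$ is the fibre of $C\to C/G_1$ through $P_2$, namely $\sum_{\sigma\in G_1}\sigma(P_2)$; symmetrically $\mathrm{div}_\infty(y)=\sum_{\tau\in G_2}\tau(P_1)$. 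Each $Q_i$ is a smooth (inner) point, so $\mathrm{mult}_{Q_i}\varphi(C)=1$ and $\deg\varphi(C)=|G_i|+1$; thus $\deg D=|G_1|+1$. Comparing $D$ with $\mathrm{div}_\infty(x)$, a local equation of $\ell$ pulls back to a local form of $1/x$ at every point over $\ell\setminus\{Q_1\}$, so the two divisors agree away from the unique point $P_1$ over $Q_1$; a degree count then forces $D=P_1+\mathrm{div}_\infty(x)$. Symmetrically $D=P_2+\mathrm{div}_\infty(y)$, and equating the two expressions yields (c).

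For sufficiency I reverse this construction. Using (a) I pick generators $x$ of $k(C)^{G_1}=k(C/G_1)$ and $y$ of $k(C)^{G_2}$, normalized by a coordinate change on each $\Bbb P^1$ so that the pole of $x$ lies over the image of $P_2$ in $C/G_1$ and the pole of $y$ over the image of $P_1$ in $C/G_2$; then $\mathrm{div}_\infty(x)=\sum_{\sigma\in G_1}\sigma(P_2)$ and $\mathrm{div}_\infty(y)=\sum_{\tau\in G_2}\tau(P_1)$. By (b) and Proposition \ref{ftt}, $\varphi=(x:y:1)$ is birational with Galois points $Q_1=(0:1:0)$ and $Q_2=(1:0:0)$ of groups $G_1$ and $G_2$. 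Now (c) reads $P_1+\mathrm{div}_\infty(x)=P_2+\mathrm{div}_\infty(y)=:D$; since $P_1\neq P_2$, the pointwise maximum of $\mathrm{div}_\infty(x)=D-P_1$ and $\mathrm{div}_\infty(y)=D-P_2$ equals $D$, so a general line pulls back to the degree $|G_1|+1$ divisor $D$ and $\deg\varphi(C)=|G_1|+1$. Reading off the coefficient of $P_1$ in (c) shows that the pole order of $y$ at $P_1$ exceeds that of $x$ by exactly one, whence $\varphi(P_1)=(0:1:0)=Q_1$, and symmetrically $\varphi(P_2)=Q_2$. Finally $\mathrm{mult}_{Q_i}\varphi(C)=\deg\varphi(C)-|G_i|=1$, so $Q_1$ and $Q_2$ are smooth, i.e.\ inner, with $G_{Q_i}=G_i$.

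I expect the main obstacle to be the local bookkeeping at $P_1$ and $P_2$: one must translate condition (c), coefficient by coefficient, into the two assertions that the hyperplane class is the \emph{single} divisor $D$ of degree $|G_1|+1$ and that at each $P_i$ one coordinate out-poles the other by exactly one. This has to be carried out uniformly regardless of whether $P_1$ and $P_2$ lie in a common orbit, equivalently whether $\ell$ is tangent to $\varphi(C)$ at a Galois point, in which case the relevant pole orders are the ramification indices rather than $1$. Condition (c) is exactly the compatibility that forces both points to become inner simultaneously; without it the polar divisors of $x$ and $y$ need not assemble into a degree $|G_1|+1$ hyperplane class, and the two points would fail to be smooth.
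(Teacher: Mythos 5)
Your proposal is correct and follows essentially the same route as the paper's proof: in the only-if direction you identify the pullback of the line $\overline{Q_1Q_2}$ with the divisor $D=P_1+\sum_{\sigma\in G_1}\sigma(P_2)=P_2+\sum_{\tau\in G_2}\tau(P_1)$, and in the if direction you choose generators $x, y$ of the fixed fields with polar divisors $D-P_1$ and $D-P_2$, invoke Proposition \ref{ftt} for birationality, and use the base-point-freeness of the span of $x, y, 1$ (your pointwise-maximum computation is exactly the paper's support condition) to get $\deg\varphi(C)=\deg D$. Your extra bookkeeping --- the Artin-type argument for (b), the pole-order comparison showing $\varphi(P_i)=Q_i$, and the multiplicity count $\mathrm{mult}_{Q_i}\varphi(C)=\deg D-|G_i|=1$ giving smoothness --- merely makes explicit what the paper leaves terse.
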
 

\begin{remark} 
\begin{itemize}
\item[(1)] For outer Galois points, we have to only replace (c) by 
\begin{itemize}
\item[(c')] There exists a point $Q \in C$ such that $\sum_{\sigma \in G_1} \sigma (Q)=\sum_{\tau \in G_2} \tau(Q)$ as divisors. 
\end{itemize}
\item[(2)] Yoshihara \cite{yoshihara3} gave a different criterion from ours, for two outer Galois points. 
Conditions (a) and (b) appear, but condition (c') does not appear in the statement of Yoshihara's criterion. 
\end{itemize} 
\end{remark}

We present the following application for rational or elliptic curves.  

\begin{theorem} \label{main_rational}
Let $p \ne 2$. 
Then, there exist the following morphisms $\varphi: \Bbb P^1 \rightarrow \Bbb P^2$, which are birational onto their images.  
\begin{itemize}
\item[(1)] $\deg \varphi(C)=5$ and there exist two Galois points $\varphi(P_1)$ and $\varphi(P_2) \in \varphi(C)$ such that $G_{\varphi(P_i)} \cong \Bbb Z/4\Bbb Z$ for $i=1, 2$, in $p \ne 3$. 
\item[(2)] $\deg \varphi(C)=5$ and there exist two Galois points $\varphi(P_1)$ and $\varphi(P_2) \in \varphi(C)$ such that $G_{\varphi(P_i)} \cong (\Bbb Z/2\Bbb Z)^{\oplus 2}$ for $i=1, 2$. 
\item[(3)] $\deg \varphi(C)=5$ and there exist two Galois points $\varphi(P_1)$ and $\varphi(P_2) \in \varphi(C)$ such that $G_{\varphi(P_1)} \cong \Bbb Z/4\Bbb Z$ and $G_{\varphi(P_2)} \cong (\Bbb Z/2\Bbb Z)^{\oplus 2}$. 
\item[(4)] $\deg \varphi(C)=6$ and there exist two Galois points $\varphi(P_1)$ and $\varphi(P_2) \in \varphi(C)$ such that $G_{\varphi(P_i)} \cong \Bbb Z/5\Bbb Z$ for $i=1, 2$. 
\end{itemize}
\end{theorem}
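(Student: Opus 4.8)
The plan is to apply Theorem \ref{main} to $C = \Bbb P^1$. Since any quotient of $\Bbb P^1$ by a finite subgroup of ${\rm Aut}(\Bbb P^1) = {\rm PGL}_2(k)$ is again rational, condition (a) holds automatically, so for each of the four cases it suffices to exhibit finite subgroups $G_1, G_2 \subseteq {\rm PGL}_2(k)$ of the prescribed isomorphism type together with points $P_1, P_2 \in \Bbb P^1$ satisfying (b) and (c). Because the projection from an inner Galois point $\varphi(P_i)$ realizes the quotient map $\Bbb P^1 \to \Bbb P^1/G_i$ of degree $|G_i|$, the resulting plane curve has degree $\deg \varphi(\Bbb P^1) = |G_i| + 1$; note that condition (c), by comparing degrees of divisors, already forces $|G_1| = |G_2|$, yielding degree $5$ when $|G_i| = 4$ (cases (1)--(3)) and degree $6$ when $|G_i| = 5$ (case (4)).

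For the groups I would work inside ${\rm PGL}_2(k)$ and use that a finite cyclic group is determined by its (at most two) fixed points. In case (1) I take $G_1 = \langle x \mapsto \zeta_4 x\rangle$ fixing $\{0, \infty\}$, with $\zeta_4$ a primitive fourth root of unity, and $G_2$ a cyclic group of order $4$ fixing a second pair of points; in case (2) two Klein four-groups; in case (3) one cyclic group of order $4$ and one Klein four-group; in case (4) two cyclic groups of order $5$. In each case the nontrivial elements are pinned down by their fixed-point pairs, so condition (b), namely $G_1 \cap G_2 = \{1\}$, is verified by arranging the fixed-point configurations of $G_1$ and $G_2$ to share no common involution. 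The hypothesis $p \neq 2$ enters here to guarantee that the order-$4$ and order-$2$ elements are semisimple with two distinct fixed points, so that all orbits have the expected size.

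The heart of the proof is condition (c). Writing out
$$P_1 + \sum_{\sigma \in G_1}\sigma(P_2) = P_2 + \sum_{\tau \in G_2}\tau(P_1),$$
the point $P_2$ occurs in the left-hand orbit (via $\sigma = 1$) and $P_1$ in the right-hand orbit, so after cancelling $P_1$ and $P_2$ the identity reduces to the equality of the two effective divisors cut out by the \emph{nontrivial} translates, $\sum_{\sigma \neq 1}\sigma(P_2) = \sum_{\tau \neq 1}\tau(P_1)$. Normalizing a cyclic $G_1$ to fix $\{0,\infty\}$, the left-hand side is the set of nonzero solutions of $x^{|G_1|} = x(P_2)^{|G_1|}$, whose elementary symmetric functions are, up to sign, the powers of $x(P_2)$ (and one uses the analogous invariant when $G_i$ is a Klein four-group); the right-hand side is the corresponding set for $G_2$, a Möbius image of such a configuration depending on the fixed points $r,s$ of $G_2$ and on $P_1$. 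Equating elementary symmetric functions turns (c) into an explicit polynomial system in $x(P_1)$, $x(P_2)$ and the positions of $r,s$, with one coordinate fixed by the residual normalization, which I would then solve in each case.

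I expect this system to be the main obstacle, for two reasons. First, the two orbit-divisors are far from generic, so one must check that their special shape makes the system consistent rather than empty: a naive symmetric choice already fails here, since taking $G_2$ conjugate to $G_1$ by an involution, or placing the fixed points of $G_2$ at $\pm 1$, leads to an obstruction vanishing only in small characteristic, and one is forced to use genuinely free fixed points for $G_2$. Second, one must verify that the resulting $P_1, P_2$ together with the two orbits are pairwise distinct, so that $\varphi$ is birational with two honest inner Galois points rather than a degenerate image. It is precisely these solvability and distinctness requirements that produce the stated characteristic hypotheses: $p \neq 2$ throughout, and the additional condition $p \neq 3$ in case (1), where the explicit solution acquires a factor vanishing in characteristic $3$.
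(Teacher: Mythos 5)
Your overall framework is exactly the paper's: apply Theorem \ref{main} with $C=\Bbb P^1$, note that (a) is automatic, deduce from the degrees of the two sides of (c) that $|G_1|=|G_2|$ and $\deg \varphi(C)=\deg D=|G_i|+1$, and reduce (c), after cancelling the identity contributions $P_1+P_2$ common to both sides, to the equality of punctured-orbit divisors $\sum_{\sigma \ne 1}\sigma(P_2)=\sum_{\tau \ne 1}\tau(P_1)$. All of that is correct and matches the paper. But the theorem is an existence statement, and your proposal stops precisely where the proof has to begin: you never exhibit $G_2$ (its fixed points $r,s$), never exhibit $P_1$ and $P_2$, and never show that the symmetric-function system you set up is solvable --- you say you ``would then solve'' it and that you ``expect'' it to be the main obstacle. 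Without witnesses or a solvability argument none of (1)--(4) is established; in particular your account of the hypothesis $p\ne 3$ in (1) (``a factor vanishing in characteristic $3$'') is a guess rather than a deduction, and your verification of (b) via fixed-point configurations is likewise contingent on data you have not produced.

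The paper closes this gap not by solving a general polynomial system but by an idea your setup misses: fix the punctured orbit in advance to be a \emph{standard} finite set and choose the groups and points to hit it. In (1), $\sigma$ and $\tau$ are explicit order-$4$ elements with $P_1=(2:1)$, $P_2=(-1:1)$, and both punctured orbits equal $\{(1:0),(1:1),(0:1)\}$; the condition $p\ne 3$ enters only because $P_1=P_2$ in characteristic $3$, and (b) still requires a direct check (e.g.\ that $\sigma^2 \ne \tau^2$), which the paper does by evaluating at test points. In (2), one takes a one-parameter family of Klein four-groups $G_\alpha=\langle \sigma_\alpha,\tau_\alpha\rangle$, $\alpha \ne 0,\pm 1$, sets $G_1=G_\alpha$, $G_2=G_{\alpha'}$, $P_1=(1:\alpha')$, $P_2=(1:\alpha)$, and both punctured orbits are again $\{(1:0),(0:1),(1:1)\}$. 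Case (3) mixes the $\sigma$ of (1) with one $G_\alpha$, checking additionally $\sigma^2 \notin G_\alpha$, and case (4) uses order-$5$ elements built from a root of $\alpha^2+\alpha-1=0$, with punctured orbits $\{(1:0),(0:1),(1:1),(1:\alpha)\}$ and $p \ne 2$ guaranteeing $P_1 \ne P_2$. If you want to salvage your approach, replace ``equate elementary symmetric functions and solve'' by this reverse-engineering: prescribe the common divisor $0+1+\infty$ (resp.\ $0+1+\infty+\alpha$) first, and then determine the groups and the two points from it; that is what makes the computation finite and transparent.
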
 

\begin{theorem} \label{main_elliptic}
Let $p \ne 3$ and let $E \subset \Bbb P^2$ be the curve defined by $X^3+Y^3+Z^3=0$. 
Then, there exists a morphism $\varphi: E \rightarrow \Bbb P^2$ such that $\varphi$ is birational onto its image, $\deg \varphi(E)=4$, and there exist two inner Galois points for $\varphi(E)$. 
\end{theorem}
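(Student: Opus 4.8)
The plan is to apply Theorem \ref{main}, so the whole task reduces to exhibiting two \emph{distinct} subgroups $G_1, G_2 \subset {\rm Aut}(E)$ of order three satisfying conditions (a), (b), (c); the degree assertion $\deg \varphi(E)=4$ is then automatic, since for an inner Galois point the degree of the plane curve exceeds the order of the associated Galois group by one, whence $\deg\varphi(E)=|G_i|+1=4$.

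Since $E$ is the Fermat cubic it is an elliptic curve of $j$-invariant $0$; fixing a flex as the origin $O$, for $p \ne 3$ there is an automorphism $\zeta \in {\rm Aut}(E,O)$ of order three. Because ${\rm End}(E)$ is an integral domain and $\zeta^3=1$, $\zeta \ne 1$, the relation $1+\zeta+\zeta^2=0$ holds in ${\rm End}(E)$. I will take
$$ G_2=\langle \zeta \rangle, \qquad G_1=\langle \alpha \rangle, \quad \alpha:=T_s\circ \zeta:\ P \mapsto \zeta(P)+s, $$
where $T_s$ is translation by a point $s\in E$ to be chosen, and verify the three conditions. Using $1+\zeta+\zeta^2=0$ one checks $\alpha^3={\rm id}$, so both groups have order three. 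The fixed points of $\zeta$ (resp.\ $\alpha$) form the fibre over $O$ (resp.\ over $s$) of the degree-three isogeny $1-\zeta$, which is separable since $\zeta$ acts on the cotangent space at $O$ by a primitive cube root of unity $\lambda\in k$ (here $p\ne 3$) and $N(1-\zeta)=3$; hence each has exactly three fixed points, and Riemann--Hurwitz gives $E/G_i \cong \mathbb{P}^1$, which is condition (a). For $s\ne O$ the map $\alpha$ lies outside $\{{\rm id},\zeta,\zeta^2\}$, so $G_1\ne G_2$; being distinct groups of prime order they meet trivially, giving (b).

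The heart of the matter is condition (c). After cancelling the common terms $P_1+P_2$, the required divisor identity becomes
$$ \alpha(P_2)+\alpha^2(P_2)=\zeta(P_1)+\zeta^2(P_1). $$
I will produce a solution by imposing the stronger crossed equalities $\alpha(P_2)=\zeta^2(P_1)$ and $\alpha^2(P_2)=\zeta(P_1)$. Writing these out with $\alpha(P)=\zeta(P)+s$ and $\alpha^2(P)=\zeta^2(P)-\zeta^2(s)$ and using $1+\zeta+\zeta^2=0$ repeatedly, the two equations turn out to be consistent and to force
$$ (\zeta-1)(P_1)=s, \qquad P_2=\zeta(P_1)-\zeta^2(s), $$
so that $P_2-P_1=(1-\zeta^2)(s)$. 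Since $1-\zeta^2$ is again a separable isogeny of degree three, its kernel consists of three points; choosing $s$ outside this kernel (all but finitely many $s$ qualify, and in particular $s\ne O$) makes $P_1\ne P_2$ while keeping $G_1\ne G_2$. Taking $P_1$ to be any point of the nonempty fibre $(\zeta-1)^{-1}(s)$ and $P_2$ as above, the crossed equalities hold by construction, and hence so does the divisor identity of (c).

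Applying Theorem \ref{main} to $G_1,G_2,P_1,P_2$ then yields a morphism $\varphi:E\to \mathbb{P}^2$, birational onto its image, with distinct inner Galois points $\varphi(P_1),\varphi(P_2)$ and $G_{\varphi(P_i)}=G_i$ of order three, whence $\deg\varphi(E)=4$ as noted. I expect the main obstacle to be condition (c): one must carry out the group-law bookkeeping correctly, where the relation $1+\zeta+\zeta^2=0$ is decisive, since it is precisely what forces the naive ``aligned'' matching to collapse to $G_1=G_2$, so that only the crossed matching succeeds, and one must also control the degree and separability of $1-\zeta^2$ in order to keep $P_1$ and $P_2$ distinct.
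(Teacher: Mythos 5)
Your proof is correct, but it takes a genuinely different route from the paper's. Both arguments in the end produce a second cyclic group of order three that is conjugate to the first --- in the paper $\tau=\eta\sigma^2\eta$ for an involution $\eta$, in yours $\alpha=T_s\circ\zeta$, which is the conjugate of $\zeta$ by the translation $T_{P_0}$ for any $P_0$ with $(1-\zeta)(P_0)=s$ --- but the verifications differ substantially. The paper stays inside the projective geometry of the plane cubic: it obtains the involution $\eta$ with $\eta(Q)=\sigma(Q)$ from the degree-two linear system $|Q+\sigma(Q)|$, makes condition (c) hold automatically by setting $P_1=\tau^2(Q)$, $P_2=\sigma^2(Q)$ (both sides of (c) then equal $Q+\sigma(Q)+\sigma^2(Q)+\tau^2(Q)$), and concentrates all the difficulty in proving $\sigma^2(Q)\ne\tau^2(Q)$, which it does by contradiction using that the outer Galois points of the Fermat cubic are exactly $\{(1:0:0),(0:1:0),(0:0:1)\}$ and that $\eta$ extends to a linear transformation of $\Bbb P^2$. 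You instead work in the group law and the endomorphism ring: I checked that $\alpha^2(P)=\zeta^2(P)-\zeta^2(s)$, that your crossed system is consistent precisely as claimed (it forces $(\zeta-1)(P_1)=s$ and $P_2=\zeta(P_1)-\zeta^2(s)$, and conversely these choices satisfy both equalities), and that $P_2-P_1=(1-\zeta^2)(s)$ with $1-\zeta^2$ a separable degree-three isogeny, so $s\notin\ker(1-\zeta^2)$ gives $P_1\ne P_2$. Your method avoids both the auxiliary involution and the Galois-point classification of the cubic, and yields an explicit family of examples parametrized by $s$; the paper's method is more geometric and generalizes to situations where one has no group law to exploit. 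Two minor remarks: in characteristic $2$ the Fermat cubic is supersingular, so ${\rm End}(E)$ is a non-commutative quaternion order rather than an integral domain --- but your argument only needs the absence of zero divisors (or just that $\Bbb Z[\zeta]$ is a domain), which holds, so $1+\zeta+\zeta^2=0$ stands; and for condition (a) you could shortcut the Riemann--Hurwitz count by observing, as above, that $\alpha$ is conjugate to $\zeta$ by a translation, whence $E/\langle\alpha\rangle\cong E/\langle\zeta\rangle\cong\Bbb P^1$ at once.
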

\section{Proof of the main theorem} 

\begin{proof}[Proof of Proposition \ref{ftt}] 
Let $P_1=(0:1:0)$ and $P_2=(1:0:0)$. 
Then, the projection $\pi_{P_1}$ (resp. $\pi_{P_2}$) is given by $(x:y:1) \mapsto (x:1)$ (resp. $(x:y:1) \mapsto (y:1)$), and hence, $\pi_{P_1} \circ \varphi=(f:1)$ (resp. $\pi_{P_2} \circ \varphi=(g:1)$). 
We have to only show that $k(C)=k(f, g)$. 
Since $k(C)/k(f)$ is Galois, there exists a subgroup $H_1$ of $G_1$ such that $H_1={\rm Gal}(k(C)/k(f, g))$. 
Similarly, there exists a subgroup $H_2$ of $G_2$ such that $H_2={\rm Gal}(k(C)/k(f, g))$.  
Since $G_1 \cap G_2=\{1\}$, $H_1=H_2=\{1\}$. 
Therefore, $k(C)=k(f, g)$. 
\end{proof}

\begin{proof}[Proof of Theorem \ref{main}] 
We consider the only-if part. 
Let $\varphi(P_1)$ and $\varphi(P_2) \in \varphi(C)$ be inner Galois points such that $G_{\varphi(P_i)}=G_i$ for $i=1, 2$. 
Assertion (a) is obvious. 
The proof of assertion (b) is similar to \cite[Lemma 7]{fukasawa1}. 
Let $D$ be the divisor induced by the intersection of $C$ and the line $\overline{P_1P_2}$, where $\overline{P_1P_2}$ is the line passing through $P_1$ and $P_2$. 
We can consider the line $\overline{P_1P_2}$ as a point in the images of $\pi_{P_1} \circ \varphi$ and $\pi_{P_2} \circ \varphi$. 
Since $\pi_{P_1} \circ \varphi$ (resp. $\pi_2 \circ \varphi$) is a Galois covering and $P_2 \in C \cap \overline{P_1P_2}$ (resp. $P_1 \in C \cap \overline{P_1P_2}$), 
$$(\pi_{P_1} \circ \varphi)^*(\overline{P_1P_2})=\sum_{\sigma \in G_1}\sigma(P_2) \ \  \left(\mbox{resp. } (\pi_{P_2} \circ \varphi)^*(\overline{P_1P_2})=\sum_{\tau \in G_2}\tau(P_1)\right)$$ as divisors (see, for example, \cite[III.7.1, III.7.2, III.8.2]{stichtenoth}). 
On the other hand, it follows that $(\pi_{P_1} \circ \varphi)^*(\overline{P_1P_2})=D-P_1$ (resp. $(\pi_{P_2} \circ \varphi)^*(\overline{P_1P_2})=D-P_2$).  
Therefore, 
$$ D=P_1+\sum_{\sigma \in G_1}\sigma(P_2)=P_2+\sum_{\tau \in G_2}\tau(P_1), $$
which is nothing but assertion (c). 

We then consider the if-part. 
Let $D$ be the divisor 
$$ D=P_1+\sum_{\sigma \in G_1}\sigma(P_2)=P_2+\sum_{\tau \in G_2}\tau(P_1), $$
by $(c)$. 
Let $f$ and $g \in k(C)$ be generators of $k(C/{G_1})$ and $k(C/{G_2})$ such that $(f)_{\infty}=D-P_1$ (i.e. $f \in \mathcal{L}(D-{P_1})$) and $(g)_{\infty}=D-P_2$, by (a), where $(f)_{\infty}$ is the pole divisor of $f$.   
Then, $f, g \in \mathcal{L}(D)$. 
Let $\varphi: C \rightarrow \Bbb P^2$ be given by $(f:g:1)$. 
Similar to Proposition \ref{ftt}, by (b), $\varphi$ is birational onto its image. 
The sublinear system of $|D|$ corresponding to $\langle f, g, 1\rangle$ is base-point-free, since ${\rm supp}(D) \cap {\rm supp}((f)+D)=\{P_1\}$ and ${\rm supp}(D) \cap {\rm supp}((g)+D)=\{P_2\}$. 
Therefore, $\deg \varphi(C)=\deg D$, and the morphism $(f:1)$ (resp. $(g:1)$) coincides with the projection from the smooth point $\varphi(P_1) \in \varphi(C)$ (resp. $\varphi(P_2) \in \varphi(C)$). 
\end{proof} 

\section{Applications} 

First, we consider rational curves. 

\begin{proof}[Proof of Theorem \ref{main_rational}] 
(1). 
Let $\sigma, \tau \in {\rm Aut}(\Bbb P^1)$ be represented by 
$$ \left(\begin{array}{cc} 
1 & -1 \\
1 & 1 
\end{array} \right), \ 
\left(\begin{array}{cc} 
0 & 1 \\
-\frac{1}{2} & 1 
\end{array} \right) $$
respectively, by assuming $p \ne 2$. 
Let $G_1=\langle \sigma \rangle$, $G_2=\langle \tau \rangle$, $P_1=(2:1)$ and $P_2=(-1:1)$. 
If $p \ne 3$, then $P_1 \ne P_2$ and condition (a) in Theorem \ref{main} is obviously satisfied. 
Note that
$$\{\sigma^i(P_2)| i=1, 2, 3\}=\{(1:0), (1:1), (0:1)\}=\{\tau^i(P_1)|i=1, 2, 3\}. $$
Condition (c) in Theorem \ref{main} is satisfied.    
Furthermore, $\sigma^4=1$ and $\tau^4=1$. 
We prove condition (b) in Theorem \ref{main}. 
Assume by contradiction that $\sigma^i=\tau^j$ for some $i, j$. 
If $i=1$ or $3$, then there exists an integer $l$ such that $(\sigma^i)^l=\sigma$. 
Then, $\tau^{jl}(0:1)=\sigma (0:1)=(-1:1)$. 
However, there exists no integer $i$ such that $\tau^i(0:1)=(-1:1)$. 
This is a contradiction. 
Therefore, $i=2$ and $j=2$. 
However, $\sigma^2(1:0)=(0:1) \ne (1:1)=\tau^2(1:0)$. 

(2). 
Let $\alpha \ne 0, 1, -1$ and let $\sigma_{\alpha}, \tau_{\alpha} \in {\rm Aut}(\Bbb P^1)$ be represented by 
$$ \left(\begin{array}{cc} 
0 & 1 \\
\alpha & 0
\end{array} \right), \ 
\left(\begin{array}{cc} 
1 & -\frac{1}{\alpha} \\
1 & -1 
\end{array} \right), $$
respectively. 
Let $G_\alpha=\langle \sigma_{\alpha}, \tau_{\alpha}\rangle$. 
Since $\sigma_{\alpha}\tau_{\alpha}=\tau_{\alpha}\sigma_{\alpha}$, $G_{\alpha} \cong (\Bbb Z/2\Bbb Z)^{\oplus 2}$. 
We take $\alpha' \ne 0, 1, -1, \alpha$. 
Let $G_1=G_{\alpha}$, $G_2=G_{\alpha'}$, $P_1=(1:\alpha')$ and $P_2=(1:\alpha)$. 
Note that
$$ \{\sigma_{\alpha}(P_2), \tau_{\alpha}(P_2), \sigma_{\alpha}\tau_{\alpha}(P_2)\}=\{(1:1), (0:1), (1:0)\}=\{\sigma_{\alpha'}(P_1), \tau_{\alpha'}(P_1), \sigma_{\alpha'}\tau_{\alpha'}(P_1)\}. $$
Condition (c) in Theorem \ref{main} is satisfied. 
Conditions (a) and (b) are obviously satisfied in this case. 

(3). 
We take $\sigma$ as in (1) and $\sigma_{\alpha}, \tau_{\alpha}$ as in (2). 
Let $P_1=(1:\alpha)$ and $P_2=(1: -1)$. 
Note that  
$$ \{\sigma^i(P_2)|i=1, 2, 3\}=\{(1:0), (1:1), (0:1)\}=\{\sigma_{\alpha}(P_1), \tau_{\alpha}(P_1), \sigma_{\alpha}\tau_{\alpha}(P_1)\}. $$
Furthermore, $\sigma^2 \not\in \langle \sigma_{\alpha}, \tau_{\alpha}\rangle$. 
Similar to the proof of (2), the assertion follows by Theorem \ref{main}. 

(4). 
Let $\alpha^2+\alpha-1=0$ and let $\sigma, \tau\in {\rm Aut}(\Bbb P^1)$ be represented by 
$$ \left(\begin{array}{cc} 
1 & -1 \\
1 & -\alpha
\end{array} \right), \ 
\left(\begin{array}{cc} 
0 & 1 \\
\alpha-1 & 1 
\end{array} \right) $$
respectively. 
Let $G_1=\langle \sigma \rangle$, $G_2=\langle \tau \rangle$, $P_1=(\alpha:2\alpha-1)$ and $P_2=(1:1+\alpha)$.
If $p \ne 2$, then $P_1 \ne P_2$. 
Note that
$$\{\sigma^i(P_2)| 1 \le i \le 4 \}=\{(1:0), (0:1), (1:1), (1:\alpha)\}=\{\tau^i(P_1)|1 \le i \le 4\}. $$
Condition (c) in Theorem \ref{main} is satisfied.    
Furthermore, $\sigma^5=1$ and $\tau^5=1$.
Conditions (a) and (b) are obviously satisfied in this case. 
\end{proof}

\begin{remark} 
For $\sigma \in {\rm Aut}(\Bbb P^1)$ as in the proof of Theorem \ref{main_rational}(1), we can show that the rational function $\sum_{i=0}^3\sigma^*(t) \in k(\Bbb P^1)=k(t)$ is a generator of $k(\Bbb P^1/\langle \sigma \rangle)$. 
Then, the birational embedding $\varphi: \Bbb P^1 \rightarrow \Bbb P^2$ is represented by 
$$ (2(t^4-6t^2+1)(2t-1):(4t^4-12t^2+8t-1)(t+1):2t(t+1)(t-1)(2t-1)). $$
Similarly, for Theorem \ref{main_rational}(2), we have a birational embedding
$$ ((t^2-\alpha)^2(t-\alpha'):(t^2-\alpha')^2(t-\alpha): t(t-1)(t-\alpha)(t-\alpha')).$$
\end{remark}

\begin{remark}
According to \cite[Theorem 1]{miura}, if $p=0$, $C=\Bbb P^1$ and $\deg \varphi(C)=6$, then the number of inner Galois points is bounded by two. 
Our curve in Theorem \ref{main_rational}(4) attains this bound.  
\end{remark}

Next, we consider elliptic curves. 
Let $p \ne 3$. 
Note that if an elliptic curve $E$ admits a Galois covering over $\Bbb P^1$ of degree three, then $E$ has an embedding $\varphi: E \rightarrow \Bbb P^2$ such that the image is the Fermat cubic. 
To consider the case where $\deg{\varphi(E)}=4$, we assume that $E \subset \Bbb P^2$ is the curve defined by $X^3+Y^3+Z^3=0$.

\begin{proof}[Proof of Theorem \ref{main_elliptic}] 
Let $\sigma$ be the automorphism of $E$ given by $(X:Y:Z) \mapsto (\omega X:Y:Z)$, where $\omega^2+\omega+1=0$. 
Then, $\sigma$ is of order three and $E/\langle \sigma \rangle \cong \Bbb P^1$. 
We take a point $Q \in E \setminus \{YZ=0\}$ such that $\sigma(Q) \ne Q$ and $\sigma^2(Q) \ne Q$. 
Note that there exists an involution $\eta$ such that $\eta(Q)=\sigma(Q)$, by  the linear system $|Q+\sigma(Q)|$. 
We take $\tau:=\eta \sigma^2 \eta$. 
Then, $\tau(Q)=\sigma(Q)$, $\tau$ is of order three and $E/\langle \tau \rangle \cong \Bbb P^1$. 
Let $G_1=\langle \sigma \rangle$ and $G_2=\langle \tau \rangle$. 
Then, condition (a) in Theorem \ref{main} is satisfied for $G_1$ and $G_2$. 
Furthermore, we take $P_1=\tau^2(Q)$ and $P_2=\sigma^2(Q)$. 
To prove (b) and (c) in Theorem \ref{main}, we have to only show that $\sigma^2(Q) \ne \tau^2(Q)$. 

Let $R_1=(1:0:0)$, $R_2=(0:1:0)$ and $R_3=(0:0:1)$. 
Assume by contradiction that $\tau^2(Q)=\sigma^2(Q)$. 
Then, $\eta(\sigma^2(Q))=\sigma^2(Q)$. 
For the divisor $D=Q+\sigma(Q)+\sigma^2(Q)$, $\eta^*(D)=D$.  
Since $D$ is given by $E \cap \overline{R_1Q}$ and the linear system $|D|$ is complete, where $\overline{R_1Q}$ is the line passing through $R_1$ and $Q$, $\eta$ is the restriction of some linear transformation $\hat{\eta}: \Bbb P^2 \cong \Bbb  P^2$. 
It is well known that the set of outer Galois points for $E$ is equal to $\Delta=\{R_1, R_2, R_3\}$ (see, for example, \cite{yoshihara2}). 
Since $\hat{\eta}(\Delta)=\Delta$ and $\hat{\eta}(\overline{R_1Q})=\overline{R_1Q} \not\ni R_2, R_3$, it can be noted that $\hat{\eta}(R_1)=R_1$ and $\hat{\eta}(\overline{R_2R_3})=\overline{R_2R_3}$. 
Then, $\hat{\eta}$ fixes points $R_1$, $Q$ and the point given by $\overline{R_1Q} \cap \overline{R_2R_3}$. 
This implies that $\hat{\eta}$ is identity on the line $\overline{R_1Q}$. 
This is a contradiction. 

By Theorem \ref{main}, the assertion follows. 
\end{proof}

\begin{remark}
According to \cite[Theorem 1]{miura}, if $p=0$ and $\deg \varphi(C)=4$, then the number of inner Galois points is bounded by four. 
When $C$ is an elliptic curve, it is not known whether or not the bound is sharp.  
\end{remark}

\begin{center} {\bf Acknowledgements} \end{center} 
The author is grateful to Professor Takeshi Takahashi, Professor Tomohide Terasoma and Professor Hisao Yoshihara for helpful conversations.


\begin{thebibliography}{20}
\bibitem{fukasawa1} S. Fukasawa, Classification of plane curves with infinitely many Galois points, J. Math. Soc. Japan {\bf 63} (2011), 195--209. 
\bibitem{fukasawa2} S. Fukasawa, Complete determination of the number of Galois points for a smooth plane curve, Rend. Sem. Mat. Univ. Padova {\bf 129} (2013), 93--113.
\bibitem{miura} K. Miura, Galois points on singular plane quartic curves, J. Algebra {\bf 287} (2005), 283--293. 
\bibitem{miura-yoshihara} K. Miura and H. Yoshihara, Field theory for function fields of plane quartic curves, J. Algebra {\bf 226} (2000), 283--294. 
\bibitem{stichtenoth} H. Stichtenoth, {\it Algebraic function fields and codes}, Universitext, Springer-Verlag, Berlin (1993). 
\bibitem{takahashi} T. Takahashi, Private communications, July 2012 and September 2015.
\bibitem{terasoma} T. Terasoma, Private communications, September 2015.  
\bibitem{yoshihara1} H. Yoshihara, Function field theory of plane curves by dual curves, J. Algebra {\bf 239} (2001), 340--355. 
\bibitem{yoshihara2} H. Yoshihara, Galois points for plane rational curves, Far East J. Math. {\bf 25} (2007), 273--284;  
Errata, ibid. {\bf 29} (2008), 209--212.
\bibitem{yoshihara3} H. Yoshihara, Rational Galois subfields, unpublished, May 2008. 
\bibitem{yoshihara-fukasawa} H. Yoshihara and S. Fukasawa, List of problems, available at: \\ http://hyoshihara.web.fc2.com/openquestion.html
\end{thebibliography}
\end{document}